\newtheorem{thm}{Theorem}[section]
\newtheorem{cor}[thm]{Corollary}
\newtheorem{lem}[thm]{Lemma}
\newtheorem{prop}[thm]{Proposition}
\newtheorem{prob}[thm]{Problem}
\newtheorem{rmk}[thm]{Remark}
\numberwithin{equation}{section}\def \N {\mathbb N}
\def \Q {\mathbb Q}
\def \C {\mathbb C}
\def \Z {\mathbb Z}
\def \P {\mathbb P}
\DeclareMathOperator{\qDef}{qDef}
\title{Some observations on the dimension of Fano K-moduli}
\author{Jesus Martinez-Garcia and Cristiano Spotti }
\date{}
\begin{document}

\maketitle

\begin{abstract}In this short note we show the unboundedness of the dimension of the K-moduli space of $n$-dimensional Fano varieties, and that the \emph{dimension of the stack} can also be unbounded while, simultaneously, the dimension of the corresponding coarse space remains bounded.
\end{abstract}

\section{Main statement}

Moduli spaces of K-stable Fano varieties  have been intensively investigated in the last decade, both from a general theory point of view as well as via the study of explicit examples. In this note, we observe the following:

\begin{thm}\label{MT} For each $n>1$ the dimension (as a variety) of the K-moduli spaces of $n$-dimensional Fano varieties is unbounded. Moreover, the \emph{dimension of the K-moduli stack} can be arbitrarily big, while the dimension of its coarse variety remains bounded. 
\end{thm}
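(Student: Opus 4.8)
The plan is to combine the general existence theory for K-moduli with an explicit \emph{unbounded} family of singular log del Pezzo surfaces of small anticanonical volume, and then to pass to all dimensions by products. Recall that for fixed $n$ and $V>0$ the K-semistable Fano $n$-folds with $(-K)^{n}=V$ form a bounded family and are parametrised by a finite-type Artin stack $\mathcal{M}^{\mathrm{Kss}}_{n,V}$ with projective good moduli space $M^{\mathrm{Kps}}_{n,V}$, whose points are the K-polystable ones; in particular $\dim M^{\mathrm{Kps}}_{n,V}<\infty$, so the first assertion of the theorem is equivalent to $\sup_{V}\dim M^{\mathrm{Kps}}_{n,V}=\infty$. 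Since K-semistable Fanos of fixed dimension with volume bounded away from $0$ are bounded (Birkar, Liu, Xu), such a supremum forces $V\to 0$, so the examples must be genuinely singular. It also suffices to treat $n=2$: if $S$ is a K-polystable del Pezzo surface then $S\times\P^{n-2}$ is a K-polystable Fano $n$-fold (K-polystability is preserved by products), its deformation functor agrees with that of $S$ by the K\"unneth formula (using that $\P^{n-2}$ is rigid and $H^{1}(\mathcal{O}_S)=0$), and so the component of $M^{\mathrm{Kps}}_{n}$ through $[S\times\P^{n-2}]$ has dimension at least that of the component of $M^{\mathrm{Kps}}_{2}$ through $[S]$.

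For the surface case, for each $m\gg 0$ I would take $S_m$ to be a general quasi-smooth weighted hypersurface $\{y^{2}=b_{2m+2}(x_{0},x_{1},z)\}\subset\P(1,1,m,m+1)$ --- equivalently, the double cover of $\P(1,1,m)$ branched along a general $B\in|\mathcal{O}_{\P(1,1,m)}(2m+2)|$. Then $S_m$ is a log del Pezzo surface with $-K_{S_m}=\mathcal{O}(1)$, hence $(-K_{S_m})^{2}=2/m\to 0$, and a one-line monomial count gives $h^{0}(\mathcal{O}_{\P(1,1,m)}(2m+2))=3m+9$ while $\dim\operatorname{Aut}\P(1,1,m)=m+5$, so the surfaces $S_m$ vary in moduli in dimension at least $(3m+8)-(m+5)=2m+3$. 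Granting that a general $S_m$ is K-stable (next paragraph), these define a subvariety of $M^{\mathrm{Kps}}_{2,\,2/m}$ of dimension $\ge 2m+3$, so $\dim M^{\mathrm{Kps}}_{2,\,2/m}\ge 2m+3$; combined with the reduction above, this proves the first statement for every $n>1$.

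The one non-formal input is the K-stability of a general $S_m$. I would deduce it from K-stability of the log Fano pair $\big(\P(1,1,m),\tfrac12 B\big)$ (note $-(K+\tfrac12 B)=\mathcal{O}(1)$), which for quasi-smooth $B$ is equivalent to K-stability of the double cover by Galois descent of K-stability along $\pi\colon S_m\to\P(1,1,m)$; and I would obtain the former, via Tian's and Odaka--Sano's criterion, by checking that $\alpha\big(\P(1,1,m),\tfrac12 B\big)>\tfrac23$ for general $B$. Concretely, every $\Q$-divisor $\Q$-linearly equivalent to $\mathcal{O}(1)$ restricts to a mild divisor near the unique singular point $\tfrac1m(1,1)$ of $\P(1,1,m)$ (in the orbifold chart only about three lines through the origin compete, with total coefficient reaching $2$ only in the limit) and to a divisor of multiplicity $\lesssim 1/m$ at every smooth point (since $\mathcal{O}(1)\cdot\mathcal{O}(k)=k/m$), whence $\alpha\to 1$. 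If one prefers to avoid this computation, any unbounded-in-moduli family of K-semistable log del Pezzo surfaces will serve equally well (for instance suitable weighted hypersurfaces $\{y^{2}=b\}\subset\P(1,a,b,c)$ whose quotient singularities are mild); only its existence is used, the remainder being dimension bookkeeping together with openness of K-semistability.

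For the stack statement, recall that near a K-polystable point $[X_0]$ the K-moduli stack is \'etale-locally the quotient stack $\big[\operatorname{Def}^{\mathrm{Kss}}(X_0)/\operatorname{Aut}(X_0)\big]$, with good moduli space the associated affine GIT quotient, so that its local dimension equals $\dim\operatorname{Def}^{\mathrm{Kss}}(X_0)-\dim\operatorname{Aut}(X_0)$ --- a quantity with no a priori relation to the dimension of the coarse space. The cleanest illustration is $X_0=\P^{n}$: it is the unique K-polystable Fano $n$-fold of maximal volume $(n+1)^{n}$ and it is rigid, so $\mathcal{M}^{\mathrm{Kss}}_{n,(n+1)^{n}}=B\mathrm{PGL}_{n+1}$, of dimension $-n(n+2)$, while its coarse space is a point; letting $n\to\infty$ already exhibits K-moduli stacks of unbounded dimension over point coarse spaces. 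For a version with the ambient dimension held fixed (say $n=2$) and with the stack dimension growing through positive values, I would instead choose $b_{2m+2}$ sufficiently symmetric so that $X_0=S_m$ is K-polystable with $\operatorname{Aut}(S_m)^{0}=\mathbb{G}_m$ acting with strictly positive weights on $H^{1}(S_m,T_{S_m})$: then every nearby K-semistable deformation of $S_m$ degenerates to, hence is S-equivalent to, $S_m$, so the coarse space near $[S_m]$ is a single reduced point, whereas the stack there has dimension $\dim H^{1}(S_m,T_{S_m})-1\asymp 2m$, which is unbounded. Producing such an ``absorbing'' $\mathbb{G}_m$-action --- a sufficiently symmetric singular Fano all of whose K-semistable deformations flow to it --- is the extra ingredient needed for that refinement and, together with the $\alpha$-invariant estimate, is the only genuinely non-formal step in the argument.
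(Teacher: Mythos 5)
Your overall architecture matches the paper's: reduce to $n=2$ by taking products with $\P^{n-2}$ (using that K-polystability is preserved under products), exhibit singular del Pezzo surfaces with anticanonical volume tending to $0$ that move in moduli of unbounded dimension, and, for the stack statement, find K-polystable surfaces with a one-parameter subgroup of automorphisms acting with positive weights on the deformation space so that the local coarse space collapses to a point while the stack dimension grows. The difference lies in the surface examples: the paper uses the quotients $X_l=(\P^1\times\P^1)/\Z_l$ and $Y_l=\P^2/\Z_l$, for which K-polystability is immediate (they inherit orbifold K\"ahler--Einstein metrics, so Berman's theorem applies) and the $\Q$-Gorenstein deformation theory is completely computable (toric, no local-to-global obstructions, all deformations concentrated on the $A_{l-1}$ singularities, with explicit $\C^*$-weights $t^l,\dots,t^2$), whereas you propose general double covers $S_m\to\P(1,1,m)$ branched in degree $2m+2$. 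Your dimension bookkeeping ($h^0=3m+9$, $\dim\operatorname{Aut}\P(1,1,m)=m+5$, volume $2/m$) is fine.

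Two steps, however, are genuine gaps rather than routine verifications. First, the K-stability of a general $S_m$ rests on the claim $\alpha\big(\P(1,1,m),\tfrac12 B\big)>\tfrac23$, which you only sketch; since $B$ necessarily passes through the $\tfrac1m(1,1)$ point with a node for $m>2$, the infimum must be controlled against divisors concentrated there (including those tangent to branches of that node), and this is exactly the kind of estimate that can fail by a constant factor. Your fallback --- ``any unbounded-in-moduli family of K-semistable log del Pezzo surfaces will serve equally well, only its existence is used'' --- is circular, because producing such a family is precisely the content of the theorem in dimension $2$. Second, the stack statement is not actually established: the $B\mathrm{PGL}_{n+1}$ example has stack dimension $-n(n+2)\to-\infty$ (and varies the ambient dimension $n$, which the paper keeps fixed), which is not what ``arbitrarily big'' means here; and the refinement you correctly identify as necessary --- a K-polystable surface with $\mathbb{G}_m$ acting with strictly positive weights on its space of K-semistable deformations, so that every nearby deformation degenerates back to it --- is stated as a desideratum rather than constructed. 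The paper's $Y_l$ supply exactly this object explicitly: $\qDef(Y_l)\cong\C^{l-1}$ carries the weights $t^l,\dots,t^2$, so $[Y_l]$ is an isolated K-polystable point with stack dimension $l-3\to+\infty$.
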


Here, for \emph{dimension of the K-moduli stack} at a given point $X$ we mean the difference between the dimension of the versal space of (K-semistable) $\Q$-Gorenstein deformations of minus the dimension of the reductive automorphism group at a K-polystable point $X$ \cite[Section 0AFL]{stacks-project}.

It is well-known that smooth Fano manifolds, and more generally $\varepsilon$-log terminal Fano varieties (where $\varepsilon>0$ is fixed), form a bounded family in a fixed dimension \cite{birkar2020singularities}. Thus to construct such examples we need to consider non-smoothable varieties whose Kawamata log terminal (klt) singularities get worse and worse.

Our main theorem is a quick consequence of these two dimensional easy examples.

\begin{prop}\label{MP}
Consider the following two families of K-polystable normal surfaces:
\begin{enumerate}
    \item $X_l:= (\P^1\times \P^1)/\Z_l$, for $l\geq2$, where the action is generated by $$\zeta.([z_0:z_1],[w_0:w_1])=([\zeta z_0:z_1], [\zeta^{-1} w_0:w_1]),$$ where $\zeta$ is a primitive $l$-root of unity.
    \item $Y_l:= \P^2/\Z_l$ for $l\geq3$, $l$ odd, where the action is generated by $$\zeta.[z_0:z_1:z_2]=[\zeta z_0: \zeta^{-1} z_1: z_2],$$ where $\zeta$ is a primitive $l$-root of unity. 
    
\end{enumerate} Then \begin{enumerate}
    \item the dimension as a variety of the K-moduli space $M^K$ at $[X_l]$ is equal to $2l-3$ if $l\neq 2,4$, and equal to $2$ (resp. $6$) for $l=2$ (resp. $l=4$). 
    \item the dimension of the K-moduli stack $\mathcal{M}^K$ at $Y_l$ is equal to $l-3$ for $l\neq 3, 9$ and equal to $4$ (resp. $8$) for $l=3$ (resp. $9$). However, $[Y_l]$ is an \emph{isolated} $K$-polystable point for $l\neq 3, 9$.
\end{enumerate}
\end{prop}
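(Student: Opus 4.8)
The plan is to apply the étale-local description of K-moduli near a K-polystable point. Since $X_l$ and $Y_l$ are assumed K-polystable, near the corresponding point the K-moduli stack $\mathcal{M}^K$ is the quotient stack $[\qDef(Z)/\operatorname{Aut}(Z)]$ and the coarse space $M^K$ is the affine GIT quotient $\qDef(Z)/\!\!/\operatorname{Aut}(Z)$, where $Z=X_l$ or $Y_l$, $\qDef(Z)$ is the base of the versal $\Q$-Gorenstein deformation, and by openness of K-semistability the ``K-semistable part'' of $\qDef(Z)$ is all of $\qDef(Z)$ near the origin; here $\operatorname{Aut}(Z)$ is reductive because $Z$ is K-polystable. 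So the proof reduces to computing: (a) $\dim\qDef(Z)$ together with its smoothness; (b) $\dim\operatorname{Aut}(Z)$; and (c) the $\operatorname{Aut}(Z)$-action on $\qDef(Z)$, in particular the dimension of the generic orbit and whether the origin is the only closed orbit.

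For (a), both covers $\P^1\times\P^1$ and $\P^2$ satisfy $H^1(\mathcal{T})=H^2(\mathcal{T})=0$, and $Z$ is a quotient by a finite group acting freely in codimension one, so $H^1(Z,\mathcal{T}_Z)=H^2(Z,\mathcal{T}_Z)=0$ for the reflexive tangent sheaf; the local-to-global sequence for $\Q$-Gorenstein deformations then forces $\qDef(Z)$ to be smooth with $T_0\qDef(Z)=\bigoplus_{p\in\operatorname{Sing}Z}T^1_{\mathrm{qG}}(Z,p)$. A chart-by-chart computation at the fixed loci of the $\Z_l$-actions shows that $X_l$ has two $A_{l-1}$ points and two points of type $\tfrac1l(1,1)$, while $Y_l$ has one $A_{l-1}$ point and two points of type $\tfrac1l(1,2)$. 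One then evaluates $T^1_{\mathrm{qG}}$: at a Du Val point it is the ordinary $T^1$, of dimension the Milnor number; at $\tfrac1l(1,1)$ and $\tfrac1l(1,2)$ one passes to the index-one cover, which is either smooth or Du Val with the cyclic group acting by explicit diagonal characters, and $T^1_{\mathrm{qG}}$ is the invariant subspace. This subspace vanishes unless the singularity is of class $T$, i.e.\ of the form $\tfrac1{dn^2}(1,dna-1)$; among our singularities this happens only for $\tfrac1l(1,1)$ at $l=4$ and for $\tfrac1l(1,2)$ at $l=9$ (each then contributing one smoothing direction), besides the degenerate values $l=2$, $l=3$ where these singularities become $A_1$, $A_2$. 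Summing, $\dim\qDef(X_l)=2(l-1)$ for $l\ne 2,4$ and $=4,8$ for $l=2,4$; and $\dim\qDef(Y_l)=l-1$ for $l\ne 3,9$ and $=6,10$ for $l=3,9$.

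For (b)–(c), in each case $\operatorname{Aut}^0(Z)$ is the centraliser of $\Z_l$ inside the maximal torus of the automorphism group of the cover, a two-dimensional torus $T\cong(\C^*)^2$, so $\dim\operatorname{Aut}(Z)=2$ and the stack dimension is $\dim\qDef(Z)-2$; for $Y_l$ with $l\ne 3,9$ this already gives $l-3$. For the coarse space one records the $T$-weights on each $T^1_{\mathrm{qG}}(Z,p)$. At the ``diagonal'' singular points (the $A_{l-1}$ points of $X_l$ and of $Y_l$) every deformation weight is an integer multiple of $(1,1)$, of one sign at one $A_{l-1}$ point of $X_l$ and the opposite sign at the other, but all positive at the unique $A_{l-1}$ point of $Y_l$; at the ``anti-diagonal'' points (the $\tfrac1l(1,1)$, resp.\ $\tfrac1l(1,2)$, points, when they contribute) the weights are multiples of $(1,-1)$. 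Hence for $l\ne 2,4$ (resp.\ $l\ne 3,9$) the torus acts through a single $\C^*$ with one-dimensional kernel, and: for $X_l$ this $\C^*$ has weights of both signs, so generic orbits are closed of dimension one and $\dim_{[X_l]}M^K=\dim\qDef(X_l)-1=2l-3$; for $Y_l$ it has only positive weights, so the invariant ring is trivial, the GIT quotient is a point, $[Y_l]$ is isolated in $M^K$, and the stack dimension is $l-3$. For the sporadic $l\in\{2,4\}$ (resp.\ $\{3,9\}$) both sets of weights occur and span the character lattice, $T$ acts with finite kernel, and the quotient has dimension $\dim\qDef(Z)-2$, namely $2$ and $6$ for $X_2,X_4$ and stack dimensions $4$ and $8$ for $Y_3,Y_9$.

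The crux is step (a): correctly computing $T^1_{\mathrm{qG}}$ of the non-Du-Val cyclic quotient singularities and isolating why $l=4$ and $l=9$ are the only exceptional indices, which is precisely the appearance of a one-parameter $\Q$-Gorenstein smoothing exactly when the singularity is a primitive class-$T$ singularity. Everything else is a careful but routine bookkeeping of torus weights; it is nonetheless this bookkeeping that separates the generic formulas $2l-3$ and $l-3$ from the sporadic values and, for $Y_l$, produces the isolatedness that exhibits the gap between the dimension of the stack and that of the coarse space.
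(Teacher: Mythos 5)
Your proposal is correct and follows essentially the same route as the paper: an \'etale-local GIT presentation of K-moduli near the K-polystable point, the identification of $\qDef(Z)$ with the direct sum of the local $\Q$-Gorenstein deformation spaces $T^1_{\mathrm{qG}}(Z,p)$ via vanishing of local-to-global obstructions and of equisingular deformations, the observation that the non-Du Val cyclic quotient points are $\Q$-Gorenstein rigid except at the class-$T$ indices $l=4$ (resp.\ $l=9$), and the torus-weight bookkeeping which makes the generic orbits on $\qDef(X_l)$ closed of dimension one while every point of $\qDef(Y_l)$ flows to the origin. The only step you take as given rather than prove is the K-polystability of $X_l$ and $Y_l$ themselves (the paper settles this in one line: they inherit orbifold K\"ahler--Einstein metrics as isometric quotients and are therefore K-polystable by Berman's theorem); apart from that, your direct computation of the sporadic values $l=2,4,3,9$, which the paper instead delegates to the smoothable-case literature, checks out.
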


The surfaces $X_l$ with $l=2,4$ and $Y_l$ with $l=3,9$ are actually $\Q$-Gorenstein smoothable and they appear in the boundary of $K$-moduli of smooth del Pezzo surfaces of degree $4$, $2$,  $3$ and $1$ respectively \cite{OSS}. 

Note that if we would have considered $Y_l$ for $l$ even, we would have  a $\Z_2$ subgroup fixing the line $z_2=0$ (thus the more natural way to think about the quotient is as a pair $(X,D)$, considering the line at infinity with weight $\frac{2}{l}$). This pair will be only \emph{log}-K-polystable, not $X$.

\begin{proof}[Proof of Theorem \ref{MT}]
It simply follows by taking $\tilde{{X}_l}=X_l \times \P^{n-2}$. Of course, being the product of two $K$-polystable varieties, such $n$-dimensional varieties are still $K$-polystable \cite{Zhuang-product-k-stability} and hence the dimension of the $K$-moduli spaces at $[\tilde{X}_l]$ tends to infinity with $l$. Note, moreover, that $Y_l$ for $l\neq3,9$ actually give examples where the K-moduli reduces to a point while there are many non isomorphic strictly K-semistable Fano varieties around $Y_l$ (we are unaware if a similar phenomenon can occur for smooth Fano manifolds too).
\end{proof}

These examples also suggest the following problem:

\begin{prob}
Study in detail the local theory of K-moduli of toric del Pezzo surfaces.
\end{prob}

We expect that such investigations are interesting and important when studying moduli spaces of non-necessarily $\Q$-Gorenstein smoothable del Pezzo surfaces.

The proof of the above Proposition is based on the local study of K-stability for $\mathbb Q$-Goreinstein deformations of  the surfaces, which is possible even in this non-smoothable setting thanks to the the recent works \cite{ blum-liu-xu-ksemistability-openness,blum-HalpernLeistner-liu-xu-properness}. These type of computations have been performed for the $\Q$-Gorenstein smoothable cases of the above examples in \cite{OSS}. A similar strategy to show interesting behaviour of K-moduli spaces near toric varieties has also been considered in \cite{Kaloghiros-Petracci} to show that the moduli can be reducible and non-reduced.

\subsection*{Acknowledgments} Some of the ideas of this work were fostered in the series of conferences titled Birational Geometry, K\"ahler-Einstein Metrics and Degenerations, taking place in Moscow, Shanghai and Pohang in 2019, the first two of which were attended by the first author, who would like to thank the organisers for inviting him. The second author is supported by Villum Young Investigator $0019098$.

After writing up a first draft of this problem in late December 2020, we found out that the first example in Proposition \ref{MP} was considered a few weeks before in \cite{Liu-Petracci}, when studying the K-stability of hypersurfaces in $\P(1,1,a,a)$. We would like to thank A. Petracci for having a look at an early draft of our manuscript and giving us very useful comments which improved our manuscript.

\section{Proof of Proposition \ref{MP}}

Proposition \ref{MP} is a consequence of the next few lemmas.

\begin{lem}\label{lem:sing-classification} For the surface $X_l$ above we have that $\mbox{Sing}(X_l)=\{2A_{l-1},2 \frac{1}{l}(1,1)\}$, and the connected component to  the identity is $\mbox{Aut}_0(X_l)=(\C^{*})^2$. Similarly for $Y_l$ we have that $\mbox{Sing}(Y_l)=\{A_{l-1},2\frac{1}{l}(1,2)\}$ and $\mbox{Aut}_0(Y_l)=(\C^{*})^2$. 
\end{lem}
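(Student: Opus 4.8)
The plan is to use that both surfaces are toric. Write $\P^1\times\P^1$ and $\P^2$ with their standard fans $\Sigma$ in $N=\Z^2$ (rays $\pm e_1,\pm e_2$, resp.\ $e_1,e_2,-e_1-e_2$); in the torus coordinates $t=z_0/z_1,\ s=w_0/w_1$ (resp.\ $t=z_0/z_2,\ s=z_1/z_2$) the generator of the given $\Z_l$-action is, in both cases, translation by the order-$l$ element $(\zeta,\zeta^{-1})$ of the big torus $(\C^*)^2$. Quotienting a toric variety by a finite subgroup $H$ of its torus yields the toric variety with the same fan in the overlattice $N'$, where $N\subseteq N'$ and $N'/N\cong \widehat H$; here the characters trivial on $H$ are $M'=\{(m_1,m_2)\in\Z^2: m_1\equiv m_2 \pmod l\}$, so $N'=(M')^\vee = N + \Z\cdot\frac1l(1,-1)$. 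Thus $X_l$ (resp.\ $Y_l$) is the toric surface of the fan of $\P^1\times\P^1$ (resp.\ $\P^2$) read in $N'$. First I would record this and check that each ray stays primitive in $N'$: this is where the hypotheses enter, since for $Y_l$ with $l$ even the generator $-e_1-e_2$ of the third ray becomes $2$-divisible in $N'$, which matches the observation after Proposition~\ref{MP} that for such $l$ the quotient is more naturally a pair.

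Then I would read off the singularities cone by cone. For a two-dimensional cone $\sigma=\mathrm{cone}(u,v)$ with $u,v$ part of a basis of $N$, one has $U_\sigma\cong\C^2/G_\sigma$ with $G_\sigma=N'/(\Z u+\Z v)$ cyclic of order $l$, generated by the class of $\frac1l(1,-1)$; expressing this class in the basis $(u,v)$ reads off the type $\frac1l(a,b)$. Running this over the four cones of $X_l$ gives $\frac1l(1,-1)\cong A_{l-1}$ twice and $\frac1l(1,1)$ twice, and over the three cones of $Y_l$ it gives $A_{l-1}$ once and $\frac1l(1,2)$ twice (normalising via $\frac1l(a,b)\cong\frac1l(-a,-b)$); equivalently one linearises the $\Z_l$-action at each torus-fixed point of $\P^1\times\P^1$ (resp.\ $\P^2$) and identifies the quotient in the same way. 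For the smallest values $l=2$, resp.\ $l=3$, the listed singularities happen to coincide, but the statement holds verbatim; here I would also fix once the convention $\frac1r(a,b)=\C^2/\mu_r$ with weights $(a,b)$, so that $\frac1l(1,-1)=\frac1l(1,l-1)$ is the $A_{l-1}$ Du Val singularity.

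For $\mathrm{Aut}_0$ I would invoke the structure theorem for the connected automorphism group of a complete toric variety (Demazure in the smooth case, Cox and others in general; see also Cox--Little--Schenck, \emph{Toric Varieties}): $\mathrm{Aut}_0(X_{\Sigma})$ is a linear algebraic group whose Lie algebra is $\mathrm{Lie}(T_{N'})$ together with one root line for each Demazure root, i.e.\ each $m\in M'$ with $\langle m,u_i\rangle=-1$ for exactly one ray $u_i$ and $\langle m,u_j\rangle\geq 0$ for the others. A direct enumeration shows the candidate roots of the $\P^1\times\P^1$-fan are $\{\pm e_1^*,\pm e_2^*\}$, and those of the $\P^2$-fan are the six lattice points with $m_1-m_2\in\{\pm1,\pm2\}$; none of these lie in $M'=\{m_1\equiv m_2 \pmod l\}$ once $l\geq 2$ (resp.\ $l\geq 3$ with $l$ odd), since $l\nmid 1$ and $l\nmid 2$. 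Hence there are no roots and $\mathrm{Aut}_0(X_l)=\mathrm{Aut}_0(Y_l)=T_{N'}\cong(\C^*)^2$.

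I do not expect a genuine obstacle: this is a short finite computation once the framework is in place. The two points that need care are (i) justifying cleanly that the $\Z_l$-quotient of the toric surface is the toric variety of the same fan in $N'$, and keeping the lattice and primitivity bookkeeping (and the role of the size and parity of $l$) straight; and (ii) quoting a version of the toric automorphism theorem valid for our singular, $\Q$-factorial, complete toric surfaces rather than only the smooth case. With those in hand the Demazure-root check is immediate, and everything else is routine linear algebra over $\Z$.
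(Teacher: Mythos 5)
Your proof is correct, and it is worth separating its two halves when comparing with the paper. For the singularity classification you and the paper are doing the same linear algebra: the paper linearises the $\Z_l$-action at each fixed point of $\P^2$ (resp.\ $\P^1\times\P^1$) and reads off the weights $(1,-1)$ at $[0:0:1]$ and $(1,2)$ at the other two fixed points, while you read the same data off the cones of the fan in the overlattice $N'=N+\Z\cdot\frac{1}{l}(1,-1)$; this is the same computation in toric dress, and you correctly isolate where the hypothesis that $l$ is odd enters for $Y_l$ (primitivity of $-e_1-e_2$ in $N'$), which matches the paper's remark about the $\Z_2$-stabilised line $z_2=0$. Where you genuinely diverge is the automorphism group: the paper identifies $\mathrm{Aut}_0(Y_l)$ with $\mathrm{Aut}_0(\P^2;S)$, the identity component of the automorphisms of the cover fixing the fixed-point set $S$ (automorphisms of the quotient lift because $\P^2$ minus the three fixed points is the universal cover of the smooth locus), and observes that this is $(\C^*)^2$; you instead invoke the Demazure--Cox description of $\mathrm{Aut}_0$ of a complete toric variety and check that none of the finitely many candidate Demazure roots lies in $M'=\{m_1\equiv m_2\pmod l\}$, since $l\nmid 1$ and $l\nmid 2$. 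Your route is more systematic and would generalise immediately to other finite torus quotients, at the price of quoting the toric automorphism theorem in the singular setting --- which is legitimate here, since quotient-singular surfaces are simplicial and Cox's theorem applies --- whereas the paper's route is shorter and essentially self-contained. Both arguments are complete; there is no gap in yours.
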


\begin{proof}
Let's consider the $Y_l$ case ($X_l$ is completely analogous and we omit it). The singularities of $Y_l$ correspond to points on $\P^2$ where $\Z_l$ acts with non-trivial stabilizer. Near $[0:0:1]$ the action has weight $(1,-1)$ resulting in a $A_{l-1}$ canonical singularity. Similarly near the points $[1:0:0]$ and $[0:1:0]$ the action as weight $(1,2)$ resulting in $\frac{1}{l}(1,2)$ quotient singularities The statement about the automorphism follows by noting that $\mathrm{Aut}_0(Y_l)\cong \mathrm{Aut}_0(\mathbb P^2; S)\cong (\mathbb C^*)^2$, where $\mathrm{Aut}_0(\mathbb P^2; S)$ is the fixed component of the automorphism group fixing the subset $S$. For a similar computation see  \cite[Lemma 3.1]{JMG-Calabi-dream}. 
\end{proof}

\begin{rmk}
\label{remark:unboundedness-reason}
Note that the non-Du Val singularities of the set of varieties $\{X_l\}_{l=2}^\infty$ and $\{Y_l\}_{l=3}^\infty$ are indeed not $\epsilon$-log terminal for every $\epsilon>0$. Indeed, each of the two singular points $\frac{1}{l}(1,1)$ in $X_l$ is  locally analytically isomorphic to  the affine cone over the rational normal curve $C_l\subset\mathbb P^l$ and its resolution has exceptional locus $E\cong \mathbb P^1$ with $E^2=-l$. It follows that their log discrepancies equal to $\frac{2}{l}-1 \rightarrow -1$, and moreover $-K_{X_l}$ is $\Q$-Cartier (with Cartier index going to infinity) with $(-K_{X_l})^2=\frac{8}{l}\rightarrow 0$, and similarly for $Y_l$.
\end{rmk}

\begin{lem}
\label{lem:qDef-computation}
$X_l$ and $Y_l$ are K-polystable Fano variety whose space of $\Q$-Gorenstein deformations  are given by

\begin{enumerate}
    \item  $\qDef(X_l)\cong \qDef(A_{l-1})\oplus \qDef(A_{l-1}) \cong  \C^{2(l-1)}$ for $l\neq 2,4$.
    \item $\qDef(Y_l) \cong \qDef(A_{l-1}) \cong  \C^{(l-1)}$ for $l\neq 3,9$.
\end{enumerate}

\end{lem}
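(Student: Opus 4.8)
The plan is to treat the two assertions of the lemma separately: K-polystability, which is soft, and the description of the $\Q$-Gorenstein deformation space, which is the main content.

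\textbf{K-polystability.} First I would record that $X_l$ and $Y_l$ are Fano: they are normal with cyclic quotient (hence klt) singularities by Lemma~\ref{lem:sing-classification}, and $-K_{X_l}$, $-K_{Y_l}$ are ample and $\Q$-Cartier, being the descents of $-K_{\P^1\times\P^1}$, $-K_{\P^2}$ under the quotient maps, which are crepant (the $\Z_l$-action has no divisorial fixed locus). For K-polystability I would invoke the Yau--Tian--Donaldson correspondence for $\Q$-Fano varieties: it suffices to produce a weak (orbifold) K\"ahler--Einstein metric, since a $\Q$-Fano variety admitting one is K-polystable \cite{OSS}. But the generators of the $\Z_l$-actions in Proposition~\ref{MP}, lifted to $SU(2)\times SU(2)$ (resp.\ $SU(3)$), are diagonal matrices with entries of modulus one, hence act by isometries of the product round metric on $\P^1\times\P^1$ (resp.\ the Fubini--Study metric on $\P^2$); this K\"ahler--Einstein metric therefore descends to an orbifold K\"ahler--Einstein metric on $X_l$ (resp.\ $Y_l$). (Alternatively one may invoke directly that a finite crepant quotient of a K-polystable Fano is K-polystable.)

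\textbf{Reduction to local contributions.} I would use the standard description of $\Q$-Gorenstein deformations of a surface with quotient singularities via the tangent sheaf $\mathcal{T}_X:=\mathcal{H}om(\Omega^1_X,\mathcal{O}_X)$ and the sheaf $\mathcal{T}^1_{\mathrm{QG}}$ of $\Q$-Gorenstein first-order deformations, a skyscraper supported on $\mathrm{Sing}(X)$ whose stalk at a point is the $\Q$-Gorenstein first-order deformation space of the germ there \cite{OSS}. The low-degree local-to-global sequence reads
\[
0 \longrightarrow H^1(X,\mathcal{T}_X) \longrightarrow \qDef(X) \longrightarrow \bigoplus_{p\in\mathrm{Sing}(X)} \qDef(X,p) \longrightarrow H^2(X,\mathcal{T}_X),
\]
with obstructions controlled by $H^2(X,\mathcal{T}_X)$ together with the local $\Q$-Gorenstein obstructions, which vanish because the $\Q$-Gorenstein deformation functor of a quotient surface singularity is unobstructed. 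I would then show $H^1(X,\mathcal{T}_X)=H^2(X,\mathcal{T}_X)=0$: writing $\pi\colon V\to X$ for the quotient map (\'etale in codimension one, $V=\P^1\times\P^1$ or $\P^2$) one has $\mathcal{T}_X\cong(\pi_*\mathcal{T}_V)^{\Z_l}$ as reflexive sheaves, so $H^i(X,\mathcal{T}_X)\cong H^i(V,\mathcal{T}_V)^{\Z_l}$, and $H^1(V,\mathcal{T}_V)=H^2(V,\mathcal{T}_V)=0$ for these $V$ (rigidity, plus a quick Serre-duality/Euler-sequence check for the $H^2$). Hence $\qDef(X)$ is smooth of dimension $\sum_p\dim\qDef(X,p)$.

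\textbf{Local computations and conclusion.} At an $A_{l-1}$ point the germ is Gorenstein, so every deformation is automatically $\Q$-Gorenstein and $\qDef(A_{l-1})\cong\C^{l-1}$ is the smooth versal base of the $A_{l-1}$ surface singularity. At a $\tfrac1l(1,1)$ point of $X_l$ (resp.\ a $\tfrac1l(1,2)$ point of $Y_l$, $l$ odd) I would invoke the classification of $\Q$-Gorenstein deformations of cyclic quotient surface singularities: such a germ has nonzero $\Q$-Gorenstein first-order deformations precisely when it is a $T$-singularity $\tfrac1{dm^2}(1,dma-1)$. An elementary check then shows that $\tfrac1l(1,1)$ is a $T$-singularity only for $l\in\{2,4\}$ and $\tfrac1l(1,2)$ with $l$ odd only for $l\in\{3,9\}$; outside these values the germ is $\Q$-Gorenstein rigid, $\qDef=0$. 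Substituting into the displayed formula and using $\mathrm{Sing}(X_l)=\{2A_{l-1},\,2\tfrac1l(1,1)\}$ and $\mathrm{Sing}(Y_l)=\{A_{l-1},\,2\tfrac1l(1,2)\}$ from Lemma~\ref{lem:sing-classification} yields $\qDef(X_l)\cong\C^{l-1}\oplus\C^{l-1}\cong\C^{2(l-1)}$ for $l\neq 2,4$ and $\qDef(Y_l)\cong\C^{l-1}$ for $l\neq 3,9$. This also explains why the formula must fail at $l=2,4,3,9$: there the extra singularities become $T$-singularities, acquire $\Q$-Gorenstein smoothings, and the surfaces become smoothable, consistent with their appearance in the boundary of the K-moduli of smooth del Pezzo surfaces.

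\textbf{Main obstacle.} The delicate step is the local $\Q$-Gorenstein analysis: pinning down and correctly citing that a quotient surface singularity is $\Q$-Gorenstein rigid exactly when it is not a $T$-singularity (and that such germs have unobstructed $\Q$-Gorenstein deformations), and then carrying out the slightly fiddly number-theoretic verification of which $l$ turn $\tfrac1l(1,1)$ and $\tfrac1l(1,2)$ into $T$-singularities. By contrast, the vanishing of $H^1(\mathcal{T}_X)$ and $H^2(\mathcal{T}_X)$ and the passage to local contributions are routine.
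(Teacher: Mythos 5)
Your proof follows essentially the same three-step architecture as the paper's (K-polystability by descending the K\"ahler--Einstein metric through the isometric $\Z_l$-action and invoking the analytic criterion; reduction of $\qDef$ to a direct sum of local contributions; rigidity of the non-Du Val points outside the exceptional values of $l$), with one genuinely different ingredient in the middle step: where the paper cites the absence of local-to-global obstructions for del Pezzo surfaces \cite{Corti-Petracci-et-al-surfaces-ms} and the absence of equisingular deformations of toric varieties \cite{Petracci}, you compute $H^1(X,\mathcal{T}_X)=H^2(X,\mathcal{T}_X)=0$ directly via $H^i(X,\mathcal{T}_X)\cong H^i(V,\mathcal{T}_V)^{\Z_l}$ for the quotient map $\pi\colon V\to X$ (\'etale in codimension one) and the rigidity of $\P^2$ and $\P^1\times\P^1$. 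That computation is correct and makes the reduction \eqref{eq:def-breakdown} self-contained rather than citation-driven, at the cost of not generalising to toric del Pezzo surfaces with $H^1(\mathcal{T})\neq 0$.

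There is, however, one step whose justification is wrong as stated, even though your conclusion is correct. You assert that a cyclic quotient surface singularity ``has nonzero $\Q$-Gorenstein first-order deformations precisely when it is a $T$-singularity.'' Only one direction of this holds: $T$-singularities are exactly the $\Q$-Gorenstein \emph{smoothable} germs, but a germ can be $\Q$-Gorenstein non-rigid without being a $T$-singularity. In the notation of the paper's proof ($w=\mathrm{hcf}(n,a+1)$, $n=wr$, $w=mr+w_0$ with $0\leq w_0<r$), the germ $\frac{1}{n}(1,a)$ is $\Q$-Gorenstein rigid iff $m=0$ and a $T$-singularity iff $w_0=0$; when $m\geq 1$ and $w_0\neq 0$ (e.g.\ $\frac{1}{12}(1,7)$, where $w=4$, $r=3$, $m=1$, $w_0=1$) the germ is neither rigid nor a $T$-singularity. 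So ``not a $T$-singularity'' does not imply $\qDef=\{0\}$, which is exactly the implication your argument needs. The gap is repairable here because for the specific germs in question the two notions happen to coincide: for $\frac{1}{l}(1,1)$ one checks $m\geq 1$ iff $l\in\{2,4\}$ iff $w_0=0$, and for $\frac{1}{l}(1,2)$ with $l$ odd one checks $m\geq 1$ iff $l\in\{3,9\}$ iff $w_0=0$; this is precisely the case analysis the paper carries out with the correct criterion ($m=0$). You should replace the $T$-singularity dichotomy by the rigidity criterion $m=0$ (or, equivalently, argue via the index-one cover: for $l$ odd the cover of $\frac{1}{l}(1,1)$ is smooth, hence rigid, and for $l$ even it is an $A_1$-germ whose only deformation parameter transforms with a nontrivial character of the residual cyclic group unless $l\in\{2,4\}$).
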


\begin{proof}[Proof of Lemma \ref{lem:qDef-computation}]
$Y_l$ (and $X_l$) is a K-polystable del Pezzo surface as $\Z_l$ acts by isometries with respect to the Fubini-Study metric in $\P^2$ (with respect to the product in $\P^1\times \P^1$ of the product of the Fubini-Study metrics in $\P^1$, respectively). Hence, both $X_l$ and $Y_l$ inherit an (orbifold) K\"ahler-Einstein metric and consequently they are K-polystable by \cite{Berman-KE-Kps-Klt}.

By \cite[Lemma 6]{Corti-Petracci-et-al-surfaces-ms}, it follows that there are no local-to-global obstructions to $\Q$-Gorenstein deformations on del Pezzo surfaces. Since $X_l$ is toric it does not admit \emph{equisingular deformations} (i.e. non-trivial deformations to a non-isomorphic projective variety with the same singularities), e.g., \cite[Lemma 4.4]{Petracci}. Hence all $\Q$-Gorenstein deformations must come from local $\Q$-Gorenstein deformations of the singularities. Thus
\begin{equation}
\label{eq:def-breakdown}
    \qDef(Y_l)=\bigoplus_{p\in \mathrm{Sing}(Y_l)}\qDef(p)
\end{equation}
and similar for $X_p$. 

Note that any deformation of $A_{l-1}$ is $\mathbb Q$-Gorenstein and given by the versal family $xy=z^{l}+a_{l-2}z^{l-2}+ \dots +a_0$. Hence the vector $(a_0, a_1, \dots, a_{l-2})$ defines a point in $\qDef(A_{l-1})$ and $\qDef(A_{l-1})\cong \C^{l-1}$. The proof follows from Lemma \ref{lem:sing-classification}, once we show that $\qDef(p)=\{0\}$ for $p$ non-Du Val. We will do this for $Y_l$, since the case of $X_l$ is very similar.

We claim the two $\frac{1}{l}(1,2)$ singularities of $Y_l$ are $\Q$-Gorenstein rigid (i.e. they do not admit $\Q$-Gorenstein deformations) if $l\neq 3,9$, and $\Q$-Gorenstein smoothable otherwise. Let $w=\mathrm{hcf}(l,3)$, $r> 0$ such that $l=wr$, $m\geqslant 0$ and $0\geqslant w_0<r$ such that $w=mr+w_0$. It is well known (see e.g. \cite{Corti-Petracci-et-al-surfaces-ms}) that a quotient singularity $\frac{1}{l}(1,2)$ is $\Q$-Gorenstein rigid if and only if $m=0$, or equivalently if $w=w_0$. Moreover, $\frac{1}{l}(1,2)$ is $\Q$-Gorenstein smoothable (often known as a \emph{T-singularity}) if and only if $w_0=0$ and a \emph{primitive T-singularity} if in addition $m=1$.

The number $w=\mathrm{hcf}(l,3)$ can only be $1$ or $3$. If $w=1$, then $l=wr=r$ and $1=w=mr+w_0$ implies that $m=0$ so $\frac{1}{l}(1,2)$ is $\Q$-Gorenstein rigid. If $w=3$ then $l=3k$ for some $k\in \N$ but in fact, that means that $l=3k=wr=3r$, so $l=3r$. If $r=1$ then $m=1$ and $w_0=0$ so $\frac{1}{3}(1,2)$ is a primitive T-singularity. The case $r=2$ is excluded, otherwise $l$ would be even. If $r=3$, then $m=1$ and $w_0=0$ and $\frac{1}{9}(1,2)$ is $\Q$-Gorenstein smoothable. If $r\geqslant 4$, ($r$ odd) then $m=0$ and $\frac{1}{3r}(1,2)$ is $\Q$-Gorenstein rigid. Hence, whenever $l\neq 3, 9$ we have $\qDef(\frac{1}{l}(1,2))=\{0\}$.

For $X_l$ similar computations show that the singularities $\frac{1}{l}(1,1)$ are $\Q$-Gorenstein rigid for $l\neq2,4$.
\end{proof}

\begin{rmk}
For $l=2$, $X_l$ has four $A_1$ singularities giving a four dimensional versal space of deformation. For $l=4$, the deformation space has (beside the deformations coming from the two $A_3$ singularities) $\Q$-Gorenstein deformations coming from the one dimensional family of $\Q$-Gorenstein smoothings of the $\frac{1}{4}(1,1)$ singularities. For $l=3$, $Y_l$ is just the unique cubic surface with $3A_2$-singularities, given by $xyz=t^3$ (and the only strictly K-polystable surface in the K-moduli of del Pezzo surfaces of degree $3$). The case $l=9$ was studied in \cite[Example 3.10]{OSS} and it appears in the boundary of the K-moduli compactification of smooth del Pezzo surfaces of degree $1$.
\end{rmk}


\begin{lem} \label{LA} The natural action of $G=Aut_0(X_l)\cong (\C^{\ast})^2$ ($G'=Aut_0(Y_l)\cong (\C^{\ast})^2$) on $\qDef(X_l)$ (respectively $\qDef(Y_l) $) for  $l\neq 2,4$  (resp. $l\neq 3,9$) is not effective. Moreover:
\begin{enumerate}
    \item The action on $\qDef(X_l)\cong \C^{2(l-1)}$ of $G/\cap_x(G_x)\cong \C^*$ with $t=\lambda_1\lambda_2\in G/\cap_x(G_x)$,  is given by
$$(a_0, a_1,\dots, a_{l-2}, a_{0}^{'}, \dots, a_{l-2}^{'}) \mapsto (t^{l}a_0,t^{l-1}a_1, \dots, t^2 a_{l-2}, t^{-l} a_0^{'}, \dots, t^{-2} a_{l-2}^{'});$$
    \item The action on $\qDef(Y_l)\cong \C^{l-1}$ of $G'/\cap_x(G'_x)\cong \C^*$ with $t=\lambda_1\lambda_2\in G'/\cap_x(G'_x)$,  is given by $$(a_0, a_1,\dots, a_{l-2}) \mapsto (t^{l}a_0,t^{l-1}a_1, \dots, t^2 a_{l-2}).$$
\end{enumerate}
\end{lem}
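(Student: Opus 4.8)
The plan is to reduce the statement to an explicit weight computation on the $A_{l-1}$ singularities, taking as inputs the description of the automorphism group from Lemma~\ref{lem:sing-classification} and the decomposition of $\qDef$ from Lemma~\ref{lem:qDef-computation}. First I would make the $\mathrm{Aut}_0$-action concrete near the singular points. Since $\Z_l$ lies inside the diagonal maximal torus $T$ of $\mathrm{PGL}_2\times\mathrm{PGL}_2$ (resp.\ of $\mathrm{PGL}_3$), the torus $T$ descends to $X_l$ (resp.\ $Y_l$) and exhibits $G=\mathrm{Aut}_0(X_l)=T/\Z_l\cong(\C^\ast)^2$ (resp.\ $G'$). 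I would then fix coordinates $(\lambda_1,\lambda_2)$ on $G$ so that at the $A_{l-1}$-point $p_1=([0:1],[0:1])$ the torus scales the affine coordinates $(z_0,w_0)$ by $(\lambda_1,\lambda_2)$; automatically then, at the other $A_{l-1}$-point $p_2=([1:0],[1:0])$ it scales $(z_1,w_1)$ by $(\lambda_1^{-1},\lambda_2^{-1})$, and for $Y_l$ the unique $A_{l-1}$-point $[0:0:1]$ is scaled by $(\lambda_1,\lambda_2)$. All singular points are $T$-fixed, so $G$ acts on each local deformation space, and since $G$ is connected it cannot interchange the two $A_{l-1}$-points of $X_l$. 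As the isomorphism $\qDef(X_l)\cong\bigoplus_{p}\qDef(p)$ of Lemma~\ref{lem:qDef-computation} is nothing but restriction to (formal neighbourhoods of) the singular points, it is $\mathrm{Aut}(X_l)$-equivariant; together with the vanishing of the summands attached to the $\Q$-Gorenstein rigid points $\frac1l(1,1)$ (resp.\ $\frac1l(1,2)$), this shows it suffices to compute the $G$-action on each copy of $\qDef(A_{l-1})$.

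The local computation is routine. Present $A_{l-1}=\C^2_{x,y}/\Z_l$ with weights $(1,-1)$ as $\{uv=s^l\}\subset\C^3$ with $u=x^l$, $v=y^l$, $s=xy$; a torus scaling $(x,y)\mapsto(\alpha x,\beta y)$ then acts on $\C^3$ by $(u,v,s)\mapsto(\alpha^l u,\beta^l v,\alpha\beta\, s)$. Substituting into the versal family $\{uv=s^l+a_{l-2}s^{l-2}+\dots+a_0\}$ and clearing the common factor $(\alpha\beta)^l$, one reads off that the member with parameters $(a_i)$ is carried to the member with parameters $\bigl(t^{\,l-i}a_i\bigr)_i$, where $t:=\alpha\beta$. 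Taking $(\alpha,\beta)=(\lambda_1,\lambda_2)$ gives, for the first $A_{l-1}$ of $X_l$ and for the $A_{l-1}$ of $Y_l$, the action $(a_0,\dots,a_{l-2})\mapsto(t^l a_0,t^{l-1}a_1,\dots,t^2 a_{l-2})$ with $t=\lambda_1\lambda_2$; taking $(\alpha,\beta)=(\lambda_1^{-1},\lambda_2^{-1})$ gives, for the second $A_{l-1}$ of $X_l$, the action $(a'_0,\dots,a'_{l-2})\mapsto(t^{-l}a'_0,\dots,t^{-2}a'_{l-2})$. Assembling the blocks produces exactly the two displayed formulas.

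Ineffectiveness then drops out: the formulas depend on $(\lambda_1,\lambda_2)$ only through the product $t=\lambda_1\lambda_2$, so the antidiagonal $\{(\lambda,\lambda^{-1})\}\cong\C^\ast$ acts trivially on $\qDef$. Conversely, since $l\geq3$ the exponents occurring are $2,3,\dots,l$ (together with their negatives in the $X_l$ case) and $\gcd(2,3,\dots,l)=1$, so the kernel of the action is precisely this antidiagonal; hence $\bigcap_x G_x\cong\C^\ast$ and the residual action of $G/\bigcap_x G_x\cong\C^\ast$, with parameter $t$, is faithful. The only point requiring care in all of this is bookkeeping — fixing compatible coordinates on the torus and on the local charts at every $T$-fixed point so that the stated powers of $t$ come out with the correct signs, and invoking the splitting of Lemma~\ref{lem:qDef-computation} equivariantly; there is no substantive obstacle.
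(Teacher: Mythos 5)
Your proof is correct and follows essentially the same route as the paper's: identify the descended torus action in local coordinates at the $T$-fixed singular points, push it through the invariants $u=x^l$, $v=y^l$, $s=xy$ onto the versal family of $A_{l-1}$, and note the opposite weights at the two $A_{l-1}$ points of $X_l$ and the triviality of the antidiagonal subtorus. Your added observations — equivariance of the splitting $\qDef(X_l)\cong\bigoplus_p\qDef(p)$ and the $\gcd(2,\dots,l)=1$ argument showing the kernel is exactly the antidiagonal — are correct refinements of points the paper leaves implicit.
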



\begin{proof}
 Let us start with $Y_l$. In local coordinates near the $A_{l-1}$-point $[0:0:1]$ we can take coordinates on  $Aut_0(Y_l)\cong (\C^{\ast})^2$-action such that the action is just given by $$(u,v) \mapsto(\lambda_1^{-1}u , \lambda_2^{-1}v).$$  Taking invariants for the $\Z_l$-action $x=u^l$, $y=v^l$ and $z=uv$, we get the induced action on the $A_{l-1}$-quotient singularity $xy=z^l$ given by $(\lambda_{1}^{-l} x, \lambda_{2}^{-l} y, (\lambda_{1}\lambda_{2})^{-1}z)$. Considering then the natural action induced on the versal deformation family of the singularity $xy=z^{l}+a_{l-2}z^{l-2}+ \dots +a_0$, we get that $$(a_0, a_1, \dots, a_{l-2}) \mapsto ((\lambda_1\lambda_2)^{l}a_0,(\lambda_1\lambda_2)^{l-1}a_1, \dots, (\lambda_1\lambda_2)^2 a_{l-2}).$$ 
In particular note that the action is non effective since the action of the subtorus $(s,s^{-1})\subseteq (\C^{\ast})^2 $ is  clearly trivial. Finally, putting $t=\lambda_1\lambda_2$  we obtain our statement for $Y_l$. 

The statement for $X_l$ is completely analogous, but (crucially) noticing that if we take coordinates on $Aut_0(X_l)$ to be such that near the point $([0:1],[0:1])$ the action is again by  given by $(u,v) \mapsto(\lambda_1^{-1}u , \lambda_2^{-1}v)$, then near the point $([1:0],[1:0])$ one get an action with \emph{opposite} weights. From there the statements follows immediately. \end{proof}

Descriptions of the local actions for the smoothable cases of $X_l$ and $Y_l$ can be found in \cite{OSS}. Also note that since the above action is not effective (with a $\C^\ast$ as stabilizer) all the small deformations will have a residual $\C^\ast$-action on them.

\begin{lem}\label{lem:GIT-picture} When $l\neq2,4$ the K-moduli space near $[X_l]$ is (up to a finite group action) described by the affine GIT quotient $\C^{2(l-1)}// \C^\ast$, where the $\C^\ast$-action is given as in Lemma \ref{LA}. Similarly for $Y_l$ when $l\geqslant 4$ $l\neq 9$, the K-moduli space near $[Y_l]$ is (up to a finite group action) described by the affine GIT quotient $\C^{l-1}// \C^\ast$,
\end{lem}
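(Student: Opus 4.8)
The plan is to combine the local structure theory of K-moduli with the explicit deformation data already assembled. By Lemma \ref{lem:qDef-computation} the versal $\Q$-Gorenstein deformation space of $X_l$ (resp. $Y_l$) is $\C^{2(l-1)}$ (resp. $\C^{l-1}$), and by Lemma \ref{LA} the reductive automorphism group $G = \mathrm{Aut}_0(X_l) \cong (\C^\ast)^2$ acts on this space through the quotient torus $\C^\ast = G/\cap_x(G_x)$ with the displayed weights. The key input is the local description of the K-moduli stack and its coarse space from \cite{blum-liu-xu-ksemistability-openness, blum-HalpernLeistner-liu-xu-properness, blum-HalpernLeistner-liu-xu-properness}: around a K-polystable point $X$, the K-moduli stack is étale-locally isomorphic to the quotient stack $[\qDef^{Kss}(X)/\mathrm{Aut}(X)]$ of the K-semistable locus of the versal deformation space by the (reductive) automorphism group, and the coarse K-moduli space is étale-locally the GIT-type (good moduli space) quotient $\qDef^{Kss}(X)/\!\!/\mathrm{Aut}(X)$. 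So first I would cite this local structure result to reduce the statement to understanding $\qDef^{Kss}(X_l)$ and $\qDef^{Kss}(Y_l)$ inside $\C^{2(l-1)}$ and $\C^{l-1}$.

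Second, I would argue that the K-semistable locus is in fact the \emph{entire} versal deformation space, so that no nontrivial open condition intervenes and the quotient is a plain affine GIT quotient by the $\C^\ast$ appearing in Lemma \ref{LA}. One clean way to see this: the $\C^\ast$-action on $\qDef$ has all weights of one sign on one factor and the opposite sign on the other (for $X_l$), or all positive weights (for $Y_l$); in particular $0 \in \qDef$ is the unique closed orbit in the affine GIT sense that is fixed, and every K-polystable fibre in the family must be one whose orbit is closed — and conversely, by the openness of K-semistability \cite{blum-liu-xu-ksemistability-openness} together with the fact that the central fibre is K-polystable, a Zariski-neighborhood of $0$ consists of K-semistable deformations, and this neighborhood is $\C^\ast$-invariant (the $\C^\ast$ acts linearly), hence by linearity it must be all of $\qDef$. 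Thus $\qDef^{Kss} = \qDef$ and the local model of the coarse space is $\C^{2(l-1)}/\!\!/\C^\ast$ (resp. $\C^{l-1}/\!\!/\C^\ast$).

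Third, I would record the "up to a finite group action" caveat: the automorphism group of $X_l$ (resp. $Y_l$) is an extension of a finite group (coming from $\pi_0(\mathrm{Aut})$ and from the finite kernel/identifications intrinsic to the quotient construction) by $(\C^\ast)^2$, and since the connected part $(\C^\ast)^2$ acts through $\C^\ast$, the honest local model is $[\qDef/\mathrm{Aut}(X_l)]$, whose coarse space is $(\C^{2(l-1)}/\!\!/\C^\ast)$ further divided by the residual finite group — which is exactly the content of the phrase "up to a finite group action." I would also note that for $Y_l$ we need $l \geq 4$ (so that the $A_{l-1}$ point contributes a nontrivial deformation direction, i.e. $l-1 \geq 3$) and $l \neq 9$ (so that the $\frac{1}{l}(1,2)$ points are rigid), matching the hypotheses of Lemma \ref{lem:qDef-computation}; for $X_l$ we correspondingly exclude $l = 2, 4$.

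The main obstacle I anticipate is making precise the passage from the abstract local structure theorem for K-moduli to the statement that the relevant GIT stability is governed \emph{exactly} by the linear $\C^\ast$-action of Lemma \ref{LA} with no additional linearization data or open locus — i.e. verifying that K-semistability of a small deformation coincides with GIT-semistability of the corresponding point of $\qDef$ under this torus. Morally this is forced because the whole punctured neighborhood is K-semistable and the action is linear, but one should either invoke the Luna-type slice/local VGIT description in \cite{blum-HalpernLeistner-liu-xu-properness} directly or argue via the separatedness/properness of K-moduli that the good moduli space of $[\qDef/\mathrm{Aut}]$ is the affine GIT quotient. Once that identification is in hand, the rest is bookkeeping with the explicit weights.
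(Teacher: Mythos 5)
Your overall strategy coincides with the paper's: both reduce the lemma to the local \'etale/GIT description of K-moduli near a K-polystable point from \cite[Proof of Theorem 4.5]{blum-liu-xu-ksemistability-openness}, cf.\ \cite[Remark 2.11]{Alper-Blum-HL-Xu}, applied to the deformation spaces and $\C^\ast$-weights computed in Lemmas \ref{lem:qDef-computation} and \ref{LA}. The paper's proof is shorter: its only additional content is the observation that every $\Q$-Gorenstein deformation of $X_l$ or $Y_l$ is again a (singular) Fano variety --- because $K_{\mathcal X}$ is $\Q$-Cartier, ampleness is an open condition, and $K_X^2\notin\Z$ --- before citing the local structure theorem. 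You omit this Fano-ness check; it is worth including, since the cited results concern families of Fano varieties.

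The one step of yours that is actually incorrect is the claim that $\qDef^{Kss}=\qDef$, argued by saying that the K-semistable locus is a $\C^\ast$-invariant Zariski-open neighbourhood of $0$ and ``by linearity it must be all of $\qDef$''. That inference is valid only when all weights have the same sign (the $Y_l$ case, where every orbit closure meets the origin, so every nonempty closed invariant set contains $0$). For $X_l$ the weights are $t^{l},\dots,t^{2}$ on one factor and $t^{-l},\dots,t^{-2}$ on the other, so there are closed invariant subsets avoiding the origin (e.g.\ $\{a_0a_0'=1\}$), and an invariant open neighbourhood of $0$ need not be everything. Fortunately the lemma only asserts a description of the moduli \emph{near} $[X_l]$, so you do not need $\qDef^{Kss}$ to equal $\C^{2(l-1)}$ --- only that it is an $\mathrm{Aut}$-invariant open neighbourhood of the origin, which follows from openness of K-semistability; the paper never claims more, and the subsequent dimension count in Proposition \ref{MP} only uses the closedness of the generic orbits $a_ja_j'=c_j$ near the origin. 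I would delete that paragraph and state the local model as the affine quotient of the K-semistable locus. Your remaining points (the finite group caveat coming from $\pi_0(\mathrm{Aut})$, and the hypotheses on $l$ matching Lemma \ref{lem:qDef-computation}) agree with the paper.
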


\begin{proof} Any $\Q$-Gorenstein deformation of $X_l$ and $Y_l$ is still a Fano variety since the canonical $K_{\mathcal{X}}$ of the total space of a deformation $\mathcal{X}$ is $\Q$-Cartier and ampleness is an open condition. Moreover, the deformation is singular, since it is flat and $K_X^2\not\in \mathbb Z$.  Then the characterization of those varieties in the deformation which are  $K$-polystable follows by the local GIT description of non-necessarily smoothable Fano varieties in \cite[Proof of Theorem 4.5]{blum-liu-xu-ksemistability-openness}, cf.\cite[Remark 2.11]{Alper-Blum-HL-Xu}, where it is shown that $K$-semistability is an open condition and that $K$-polystability can be checked locally by considering the action of the automorphisms.
\end{proof}

We are now ready to conclude the proof of our Proposition \ref{MP}:
\begin{proof}[Proof of Proposition \ref{MP}]. For $Y_l$ it is clear that that all points near zero in $\qDef(Y_l)$ are K-semistable by openness. However, note that all such points are destabilized to zero since
$$\lim_{t\rightarrow 0}(t^la_0, \dots, t^2 a_{l-2})=0.$$
Hence only $0$ is GIT polystable, and $Y_l$ an isolated K-polystable variety. However, by \cite[Lemma 98.12.1]{stacks-project}, the dimension of the stack at the point $Y_l$ is equal to $$\mbox{dim}_{Y_l}(\mathcal{M}^k)= \mbox{dim}\,\qDef(Y_l)-\mbox{dim}\, \mathrm{Aut}(Y_l)= (l-1)-2=l-3.$$

For $X_l$ it is now sufficient to compute the dimension (as a variety) of the GIT quotient $\C^{2(l-1)}// \C^\ast$ above. But it is clear that the generic orbit is closed (with no further stabilizer). Indeed, if coordinates $a_j$ and $a_j'$ in Lemma \ref{LA} are all non-zero, then the orbits are given by the closed set $a_ja_{j}^{'}=c_j\neq 0$, with $j=0, \dots, {l-2}$. Hence $\mbox{dim}_{\C}  M^K$ near $[X_l]$ is simply given by $2(l-1)-1=2l-3$ as claimed.
\end{proof}

Observe that if we consider a deformation of $X_l$ which smooths only one of the two $A_{l-1}$ singularities, the resulting variety is strictly K-semistable and never K-polystable, since in order to obtain K-polystable varieties we need to deform the two $A_{l-1}$ singularities \emph{simultaneously} by the same computation as for the $Y_l$ case in the last paragraph of the proof of Proposition \ref{MP}.

Note  also that for $X_l$, $l\neq 2,4$, since the action is not effective, we also have a discrepancy between the dimension of the stack and the dimension of the coarse space (which is then one dimension \emph{bigger} than expected).


\section{Some final comments}

The general small deformation $X_t$ of $X_l$ is then a K-polystable variety which is also K\"ahler-Einstein by \cite{Li-Tian-Wang-YTD-conj-singular}. Moreover the second Betti number gets bigger and bigger as $l$ goes to infinity: indeed, smoothing out an $A_{l-1}$-singularity introduces a chain of $S^2$ of length ${l-1}$, giving distinct homological classes. Hence:

\begin{cor}
 There are K-polystable/K\"ahler-Einstein del Pezzo surfaces with arbitrarily big second Betti number.
\end{cor}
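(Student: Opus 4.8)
The plan is to deduce the Corollary from the construction of $X_l$ and its smoothings already established in Proposition \ref{MP} and Lemma \ref{lem:qDef-computation}, the point being simply to track the topology under a $\Q$-Gorenstein smoothing of an $A_{l-1}$ singularity. First I would recall that by Lemma \ref{lem:qDef-computation} a generic element $X_t$ of $\qDef(X_l)$ simultaneously smooths both $A_{l-1}$ points (and there are no other $\Q$-Gorenstein deformations for $l \neq 2,4$), that by Lemma \ref{lem:GIT-picture} and the proof of Proposition \ref{MP} such a generic $X_t$ lies on a closed orbit with trivial stabilizer, hence is $K$-polystable, and therefore admits a (possibly orbifold, but in fact smooth, since $X_t$ is smooth) Kähler–Einstein metric by the singular Yau–Tian–Donaldson theorem \cite{Li-Tian-Wang-YTD-conj-singular}, as already noted in the paragraph preceding the statement.

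Next I would compute $b_2(X_t)$. The key input is the local model: a $\Q$-Gorenstein (here equivalently any, since $A_{l-1}$ is a hypersurface singularity) smoothing of an $A_{l-1}$ surface singularity replaces a neighbourhood of the singular point — which is contractible, being a cone — by the Milnor fibre of $xy = z^l$, which is homotopy equivalent to a bouquet of $l-1$ two-spheres; equivalently the minimal resolution of $A_{l-1}$ has exceptional divisor a chain of $l-1$ rational curves, and smoothing turns each into a vanishing $2$-cycle. A Mayer–Vietoris argument comparing $X_l$ (which as a quotient of $\P^1\times\P^1$ has $b_2 = 1$, or more carefully one computes $b_2$ of the orbifold/singular surface and its smooth locus) with $X_t$, or equivalently the standard fact that smoothing a rational double point of type $A_{l-1}$ increases $b_2$ by exactly $l-1$ (compare e.g. the Noether-type formula $K^2 + b_2 = 10$ for smooth del Pezzo surfaces against $K_{X_l}^2 = 8/l$), gives $b_2(X_t) \geq l - 1$ when both singularities are smoothed, hence $b_2(X_t) \to \infty$ as $l \to \infty$. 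It then suffices to let $l$ range over values $\neq 2,4$.

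Concretely the cleanest route is: take the smooth $K$-polystable surface $X_t$ for $l$ large; since it is a smooth del Pezzo surface (being a smoothing of a del Pezzo surface with $K_{X_t}^2 = K_{X_l}^2 = 8/l$ — wait, this is not an integer, so $X_t$ is \emph{not} Gorenstein and $K_{X_t}$ is only $\Q$-Cartier), one cannot invoke the classification of smooth del Pezzos directly; instead one argues homologically as above. The main obstacle, and the only genuinely delicate point, is making the Betti-number count rigorous for the singular/orbifold surface $X_l$ rather than a manifold: one must control $b_2$ of $X_l$ itself (using that it is a global quotient of $\P^1\times\P^1$, so rational cohomology is the invariant part, giving $b_2(X_l;\Q) = 1$) and then show the smoothing of each $A_{l-1}$ point contributes exactly $l-1$ to $b_2$ — this is standard (the local Milnor fibre computation plus excision) but needs to be invoked carefully since the global deformation is only $\Q$-Gorenstein and flat. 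Everything else — $K$-polystability, existence of the KE metric, smoothness of the generic fibre — is already in hand from the preceding lemmas, so the corollary follows by assembling these pieces and letting $l \to \infty$.
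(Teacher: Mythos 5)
Your argument is essentially the paper's: the authors also take the general small deformation $X_t$ of $X_l$, note it is K-polystable and hence K\"ahler--Einstein by \cite{Li-Tian-Wang-YTD-conj-singular}, and observe that smoothing each $A_{l-1}$ point introduces a chain of $l-1$ two-spheres, so $b_2(X_t)\to\infty$ with $l$. Your extra care about the local Milnor fibre computation and about $b_2(X_l;\Q)=1$ is a reasonable fleshing-out of what the paper leaves as a one-line remark.

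One genuine error to fix: $X_t$ is \emph{not} smooth. By Lemma \ref{lem:qDef-computation} the two $\frac{1}{l}(1,1)$ points of $X_l$ are $\Q$-Gorenstein rigid for $l\neq 2,4$, so every $\Q$-Gorenstein deformation retains them; the generic $X_t$ smooths only the two $A_{l-1}$ points. Indeed the proof of Lemma \ref{lem:GIT-picture} states explicitly that the deformation is singular because $K_{X_t}^2=8/l\notin\Z$. Your parenthetical ``(possibly orbifold, but in fact smooth, since $X_t$ is smooth)'' is therefore false, and your own later observation that $X_t$ is not Gorenstein already contradicts it (a smooth surface is Gorenstein). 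The corollary survives because in this paper ``K\"ahler--Einstein del Pezzo surface'' includes orbifold Kähler--Einstein metrics on klt del Pezzo surfaces (see the discussion of Einstein orbifolds immediately after the corollary), and the Betti-number count is unaffected: the two persistent $\frac{1}{l}(1,1)$ points do not interfere with the $2(l-1)$ vanishing cycles coming from the smoothed $A_{l-1}$ points. So delete the smoothness claim and state the conclusion for orbifold KE metrics; everything else stands.
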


We should also observe that this moduli space corresponds to the moduli of K\"ahler-Einstein orbifolds with positive cosmological constant, hence giving also examples of moduli spaces of positive Einstein orbifolds of unbounded dimension. Thus, from a more differential geometric perspective,  it would be interesting to know if a bound on the second Betti number would instead force the dimension of the moduli spaces of such metrics to stay bounded.

Finally, note that the unboundedness of the dimension can be avoided by bounding below either the volume or the singularities. Indeed, that is what \cite{Jiang-boundedness} proves, where the measure of boundedness used for the singularities is the alpha-invariant. This does not contradict our example, as we had that $K_{X_l}^2\rightarrow 0$ as $l$ grows and the log discrepancies were monotonously decreasing with $l$ towards $-1$. What is remarkable of this example is not that a bound below on the volume or the singularities are required to achieve boundedness of families, there were plenty of examples of this behaviour in \cite{Jiang-boundedness}. What is remarkable is that removing such bounds not only gives an infinite number of families (whose dimension, one may think could, in principle, be uniformly bounded), but it also gives infinite dimension of the moduli.

\printbibliography

		\bigskip	

{\sc University of Essex}

{\tt jesus.martinez-garcia@essex.ac.uk}

{\sc Aarhus University}

{\tt c.spotti@math.au.dk}

\end{document}